\documentclass[11pt]{amsart}

\usepackage{amscd}
\usepackage{amsmath, amssymb}
\usepackage{amsfonts}
\newcommand{\de}{\partial}

\newcommand{\ti}[1]{\tilde{#1}}

\newcommand{\ve}{\varepsilon}

\renewcommand{\leq}{\leqslant}
\renewcommand{\geq}{\geqslant}

\newcommand{\be}{\begin{equation}}
\newcommand{\ee}{\end{equation}}

\begin{document}
\newtheorem{claim}{Claim}
\newtheorem{theorem}{Theorem}[section]
\newtheorem{lemma}[theorem]{Lemma}
\newtheorem{corollary}[theorem]{Corollary}
\newtheorem{proposition}[theorem]{Proposition}
\newtheorem{question}{question}[section]
\newtheorem{definition}[theorem]{Definition}
\newtheorem{remark}[theorem]{Remark}

\numberwithin{equation}{section}

\title[Curvature estimate]
{A simple proof of curvature estimate for convex solution of $k$-Hessian equation}

\author[J. Chu]{Jianchun Chu}
\address{Department of Mathematics, Northwestern University, 2033 Sheridan Road, Evanston, IL 60208}
\email{jianchun@math.northwestern.edu}

\begin{abstract}
Guan-Ren-Wang \cite{GRW15} established the curvature estimate of convex hypersurface satisfying the Weingarten curvature equation $\sigma_{k}(\kappa(X)) = f(X,\nu(X))$. In this note, we give a simple proof of this result.
\end{abstract}

\maketitle

\section{Introduction}
Let $M \subset \mathbb{R}^{n+1}$ be a closed hypersurface. We consider the following curvature equation in a general form:
\begin{equation}\label{Curvature equation}
\sigma_k (\kappa(X)) = f (X, \nu (X)) \quad \text{for $X\in M$},
\end{equation}
where $\kappa (X)$ and $\nu (X)$ are principal curvatures and unit outer normal vector at $X \in M$, and $\sigma_{k}$ denotes the $k$-th elementary symmetric function
\[
\sigma_{k} (\kappa) = \sum_{i_{1}<i_{2}<\cdots<i_{k}}
\kappa_{i_{1}}\kappa_{i_{2}}\cdots\kappa_{i_{k}}.
\]
For $1\leq k\leq n$, $\sigma_{k}(\kappa)$ are the Weingarten curvatures of $M$. In particular, $\sigma_{1}(\kappa)$, $\sigma_{2}(\kappa)$ and $\sigma_{n}(\kappa)$ are the mean curvature, scalar curvature and Gauss curvature, respectively.

The curvature equation (\ref{Curvature equation}) plays a significant role in geometry. Many important geometric problem can be transformed into (\ref{Curvature equation}) with a special form of $f$, including the Minkowski problem (\cite{Nirenberg53,Pogorelov53,Pogorelov78,CY76}), the problem of prescribing general Weingarten curvature on outer normals by Alexandrov (\cite{Aleksandrov56,GG02}), the problem of prescribing curvature measures in convex geometry \cite{Alexandroff42,Pogorelov53,GLM09,GLL12}) and the prescribing curvature problem considered in \cite{BK74,TW83,CNS86}.

The curvature equation (\ref{Curvature equation}) has been studied extensively. When $k=1$, equation (\ref{Curvature equation}) is quasi-linear, so the curvature estimate follows from the classical theory of quasi-linear PDEs. When $k=n$, equation (\ref{Curvature equation}) is of Monge-Amp\`ere type. The desired estimate was established by Caffarelli-Nirenberg-Spruck \cite{CNS84}.

When $1<k<n$, if $f$ is independent of $\nu$, Caffarelli-Nirenberg-Spruck \cite{CNS86} established the curvature estimate; if $f$ depends only on $\nu$, the curvature estimate was proved by Guan-Guan \cite{GG02}. In \cite{Ivochkina89,Ivochkina90}, Ivochkina studied the Dirichlet problem of equation (\ref{Curvature equation}) on domains in $\mathbb{R}^{n}$ and obtained the curvature estimate under some additional assumptions on the dependence of $f$ on $\nu$. For the prescribing curvature measures problem, Guan-Lin-Ma \cite{GLM09} and Guan-Li-Li \cite{GLL12} proved the curvature estimate for $f(X,\nu)=\langle X,\nu\rangle\ti{f}(X)$.

For general right-hand side $f(X,\nu)$, establishing the curvature estimate for equation (\ref{Curvature equation}) is very important and interesting in both geometry and PDEs. In \cite{GRW15}, Guan-Ren-Wang solved the case $k=2$ (in \cite{SX17}, Spruck-Xiao gave a simplified proof). In \cite{RW19,RW20}, Ren-Wang solved the cases $k=n-1$ and $k=n-2$. The other cases $2<k<n-2$ are still open. For general $k$, Guan-Ren-Wang \cite{GRW15} established the following curvature estimate for convex hypersurface:

\begin{theorem}\label{Curvature estimate}[Guan-Ren-Wang, \cite[Theorem 1.1]{GRW15}]
Let $M$ be a closed convex hypersurface satisfying curvature equation (\ref{Curvature equation})
for some positive function $f\in C^{2}(\Gamma)$, where $\Gamma$ is an open neighborhood of the unit normal bundle
of $M$ in $\mathbb{R}^{n+1}\times\mathbb{S}^{n}$. There exists a constant $C$ depending only
$n$, $k$, $\|M\|_{C^{1}}$, $\inf f$ and $\|f\|_{C^{2}}$ such that
\[
\max_{X\in M,~i=1,2,\cdots,n}\kappa_{i}(X) \leq C.
\]
\end{theorem}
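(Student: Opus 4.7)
The plan is to apply the maximum principle to the test function
\[
W(X) = \log h_{11}(X) + \phi(u(X)),
\]
where $h_{11}(X)$ is the largest eigenvalue of the second fundamental form at $X$, $u = \langle X, \nu \rangle$ is the support function (positive after translating the origin to an interior point of the convex body bounded by $M$), and $\phi$ is a one-variable function to be chosen. Since $h_{11}$ is only Lipschitz, one works at a maximum point $X_{0}$ of $W$ in a local orthonormal frame diagonalizing $(h_{ij})$ so that $h_{11} \geq h_{22} \geq \cdots \geq h_{nn}$; a uniform upper bound on $W(X_{0})$ then bounds $\kappa_{\max}$ everywhere on $M$.

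With $F^{ij} = \partial \sigma_{k}/\partial h_{ij}$, the critical point condition at $X_0$ gives $h_{11,i}/h_{11} = -\phi'(u)\, u_{i}$. I would then expand $F^{ii} W_{ii}(X_{0}) \leq 0$ using two standard computations: the commutator identity in Euclidean ambient together with differentiating $\sigma_{k}(h) = f$ twice yields
\[
F^{ii} h_{11,ii} = f_{11} - F^{ij,pq} h_{ij,1} h_{pq,1} + h_{11}\, F^{ii}\kappa_{i}^{2} - h_{11}^{2}\cdot k f,
\]
and from $u_{i} = \kappa_{i}\langle X, e_{i}\rangle$ together with the Gauss--Weingarten formulas one obtains
\[
F^{ii} u_{ii} = k f - u\, F^{ii}\kappa_{i}^{2} + f_{k}\langle X, e_{k}\rangle.
\]
Substituting, and using the critical point identity to convert $F^{ii}(h_{11,i})^{2}/h_{11}^{2}$ into $(\phi')^{2}\, F^{ii} u_{i}^{2}$, $F^{ii} W_{ii} \leq 0$ becomes
\[
\bigl(1 - \phi'(u)\, u\bigr) F^{ii}\kappa_{i}^{2} + \bigl(\phi''(u) - \phi'(u)^{2}\bigr) F^{ii} u_{i}^{2} + \frac{-F^{ij,pq} h_{ij,1} h_{pq,1}}{h_{11}} \leq h_{11}\cdot k f + C,
\]
where $C$ collects terms depending only on $n$, $k$, $\|f\|_{C^{2}}$, and $\inf f$.

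I would choose $\phi$ so that $\phi'(u) < 0$ and $\phi''(u) \geq \phi'(u)^{2}$ on the range of $u$ (for instance $\phi(t) = -\log(t - c_{0}/2)$ with $c_{0} = \min_{M} u > 0$, which makes the second coefficient vanish identically). Under this choice, all three quantities on the left are nonnegative: the first because $\phi' u < 0$, the second by construction, and the third because the concavity of $\sigma_{k}^{1/k}$ on the positive cone gives $-F^{ij,pq} h_{ij,1} h_{pq,1} \geq 0$ and in fact controls the Codazzi-coupled sum $2\sum_{i>1}\frac{F^{ii}-F^{11}}{\kappa_{1}-\kappa_{i}}(h_{1i,1})^{2}$. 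Convexity then supplies the structural ingredients needed to close the estimate: $F^{ii} > 0$ for every $i$, the identity $\sigma_{k}(\kappa) = \kappa_{1} F^{11} + \sigma_{k}(\kappa|1)$ which forces $F^{11} \leq f/h_{11}$, the Newton--MacLaurin inequality $\sum F^{ii} = (n-k+1)\sigma_{k-1}(\kappa) \geq c(n,k)\, f^{(k-1)/k}$, and the Cauchy--Schwarz lower bound $F^{ii}\kappa_{i}^{2} \geq (k f)^{2}/\sum F^{ii}$. Combining these (possibly after a dichotomy according to whether $\kappa_{2} \geq \delta \kappa_{1}$ or $\kappa_{2} < \delta \kappa_{1}$) produces the opposite direction of the displayed inequality once $h_{11}$ is sufficiently large, yielding $h_{11}(X_{0}) \leq C$.

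The main obstacle is the last step: matching the positive left side against the driving negative term $h_{11}\cdot k f$ on the right. Both quantities grow linearly in $h_{11}$ in the regime we care about, so the argument is quantitative and the choice of $\phi$ must be tuned so that the factor $(1 - \phi'(u)\, u)$ is large enough to beat the loss from the commutator term $-h_{11}^{2}\cdot k f$. Convexity of $M$ is essential throughout; it provides $u > 0$, $\kappa_{i} \geq 0$, the positivity of $F^{ij}$, direct use of concavity of $\sigma_{k}^{1/k}$ on the positive cone, and the algebraic identities relating $F^{ii}$ and the $\kappa_{j}$ that make the absorption possible, precisely the features absent in the general non-convex setting.
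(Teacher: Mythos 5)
Your overall strategy---applying the maximum principle to $\log(\text{largest principal curvature}) + \phi(u)$ and exploiting convexity---is indeed the route the paper takes (with $\phi(u) = -Au$), but the proposal as written has several gaps that the paper's argument is specifically engineered to fill.

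First, a sign issue: with the paper's commutation formula $h_{pqij} = h_{ijpq}+(h_{mq}h_{pj}-h_{mj}h_{pq})h_{mi}+(h_{mq}h_{ij}-h_{mj}h_{iq})h_{mp}$, one obtains $\sigma_k^{ii}h_{11ii} = \sigma_k^{ii}h_{ii11} - \kappa_1\sigma_k^{ii}\kappa_i^2 + k f \kappa_1^2$; you have the last two signs reversed, which flips the sign of the coefficient of $F^{ii}\kappa_i^2$ in your final inequality from $-1-\phi'u$ to $1-\phi'u$. With the correct sign, positivity of this coefficient requires $\phi'(u)u < -1$, which your $\phi(t) = -\log(t - c_0/2)$ only barely achieves: $-1-\phi'(u)u$ lies in $(0,1]$. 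The paper instead needs this coefficient to be of size $A$ with $A$ large (hence $\phi=-Au$), because the absorption step must defeat a genuine $O(\kappa_1)$ error term $-C\kappa_1$ coming from $(d^2_{\nu\nu}f)$ in $f_{11}/\kappa_1$; this term is not a constant and cannot be hidden in your ``$C$''. The crucial exact cancellation that saves the day is between $\tfrac{1}{\kappa_1}\sum_p h_{p11}(d_\nu f)(e_p)$ and $-A\sum_p h_{pp}(d_\nu f)(e_p)\langle e_p, X\rangle$, which vanishes identically by the critical-point identity $h_{11p}/\kappa_1 = A h_{pp}\langle e_p,X\rangle$ and Codazzi; your proposal does not isolate this, yet without it there is an uncontrolled $O(\kappa_1)$ term of indefinite sign.

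Second, and more fundamentally, you compute with $h_{11}$ treated as a matrix entry in an adapted frame, which discards the eigenvalue-differentiation terms $2\sum_{p>1}\frac{h_{1pi}^2}{\kappa_1-\kappa_p}$ present in $(\kappa_1)_{ii}$. The paper keeps these by perturbing to $\tilde\kappa_1$ via the tensor $B$, and they are genuinely needed: the bad term $-\sum_{p>1}\sigma_k^{pp}h_{11p}^2/\kappa_1^2$ from the $\log$ cannot be controlled by the concavity term $2\sum_{p>1}\sigma_k^{11,pp}h_{11p}^2/\kappa_1$ alone (this would require $\kappa_1\sigma_{k-2}(\kappa|1p) \geq \sigma_{k-1}(\kappa|1p)$, which is false in general), and Lemma 3.2 of the paper crucially uses both that term and the eigenvalue term $2\sum_{p>1}\sigma_k^{11}h_{11p}^2/(\kappa_1(\kappa_1-\tilde\kappa_p))$. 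Similarly, $2\sum_{p>1}\sigma_k^{pp}h_{pp1}^2/(\kappa_1(\kappa_1-\tilde\kappa_p))$ enters Lemma 3.4. Finally, the dichotomy you gesture at (``whether $\kappa_2\geq\delta\kappa_1$'') is in the paper an induction over $l=1,\dots,k-1$ via Lemma 3.4, driven by the key inequality of Guan--Ren--Wang \cite[(2.4)]{GRW15} relating $-\sigma_k^{pp,qq}h_{pp1}h_{qq1}$ to $\sigma_l$-quantities, which is a nontrivial lemma in its own right and is not implied by the concavity of $\sigma_k^{1/k}$ or Newton--MacLaurin alone. As written, the proposal would not close.
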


In this note, we give a simple proof of Theorem \ref{Curvature estimate}. Compared to \cite{GRW15}, we take a different approach. To establish the curvature estimate, the main difficulty is how to deal with the third order terms. We apply the maximum principle to a quantity involving the largest principal curvature $\kappa_{1}$, instead of the symmetric function of $\kappa$. This gives us more ``good" third order terms, and so simplifies the argument.

\section{Preliminaries}
For any point $X_{0}\in M$, let $\{e_{i}\}_{i=1}^{n}$ be a local orthonormal frame near $X_{0}$ such that
\[
h_{ij} = \delta_{ij}\kappa_{i}, \quad
\kappa_{1} \geq \kappa_{2} \geq \cdots \geq \kappa_{n} \ \ \text{at $X_{0}$}.
\]
We use the following notations:
\[
\sigma_{k}^{ij} = \frac{\de\sigma_{k}}{\de h_{ij}}, \quad
\sigma_{k}^{ij,pq} = \frac{\de^{2}\sigma_{k}}{\de h_{ij}\de h_{pq}}.
\]
Then at $X_{0}$, we have (see e.g. \cite{Gerhardt96,Spruck05})
\[
\sigma_{k}^{ij} = \sigma_{k-1}(\kappa|i)\delta_{ij}
\]
and
\[
\sigma_{k}^{ij,pq} =
\begin{cases}
\sigma_{k-2}(\kappa|ip) &  \mbox{if $i=j$, $p=q$, $i\neq p$};  \\
-\sigma_{k-2}(\kappa|ip) &  \mbox{if $i=q$, $p=j$, $i\neq p$};  \\
0            &  \mbox{otherwise},
\end{cases}
\]
where $\sigma_{s}(\kappa|i_{1}\cdots i_{r})$ denotes $s$-th elementary symmetric function with $\kappa_{i_{1}}=\kappa_{i_{2}}=\cdots=\kappa_{i_{r}}=0$.

Here we list some well-known formulas:
\[
\begin{split}
\text{Guass formula}: \quad & X_{ij} = -h_{ij}\nu, \\
\text{Weingarten equation}: \quad & \nu_{i} = h_{ij}e_{j}, \\
\text{Codazzi formula}: \quad & h_{ijp} = h_{ipj}, \\
\text{Guass equation}: \quad & R_{ijpq} = h_{ip}h_{jq}-h_{iq}h_{jp},
\end{split}
\]
where $R_{ijpq}$ is the curvature tensor of $M$. We also have
\begin{equation}\label{Commutation formula}
h_{pqij} = h_{ijpq}+(h_{mq}h_{pj}-h_{mj}h_{pq})h_{mi}+(h_{mq}h_{ij}-h_{mj}h_{iq})h_{mp}.
\end{equation}

\section{Simple proof of Theorem \ref{Curvature estimate}}
In this section, we give a simple proof of Theorem \ref{Curvature estimate}.
\begin{proof}[Simple proof of Theorem \ref{Curvature estimate}]
Since $M$ is convex, after shifting the origin of $\mathbb{R}^{n+1}$, we assume that $M$ is star-shaped with respect to the new origin. Thus the support function $u(X)=\langle X,\nu(X)\rangle$ is always positive. By assumptions, there exists a uniform constant $C>0$ such that
\[
\frac{1}{C} \leq u \leq C \quad \text{for $X\in M$}.
\]
Let $\kappa_{1}$ be the largest principal curvature. Since $M$ is convex, to prove Theorem \ref{Curvature estimate}, it suffices to prove $\kappa_{1}$ is uniformly bounded from above. Without loss of generality, we assume that the set $\Omega=\{\kappa_{1}>0\}$ is not empty. On $\Omega$, we consider the following quantity
\[
Q = \log\kappa_{1}-Au,
\]
where $A>1$ is a constant to be determined later. Note that $Q$ is continuous on $\Omega$, and goes to $-\infty$ on $\de\Omega$. Hence $Q$ achieves a maximum at a point $X_{0}$ with $\kappa_{1}(X_{0})>0$. However, the function $Q$ may be not smooth at $X_{0}$ when the eigenspace of $\kappa_{1}$ has dimension strictly larger than $1$, i.e., $\kappa_{1}=\kappa_{2}$ at $X_{0}$. To deal with this case, we apply the standard perturbation argument. Let $g$ be the first fundamental form of $M$ and $D$ be the corresponding Levi-Civita connection. We choose a local orthonormal frame $\{e_{i}\}_{i=1}^{n}$ near $X_{0}$ such that
\[
D_{e_{i}}e_{j} = 0, \quad
h_{ij} = \delta_{ij}\kappa_{i}, \quad
\kappa_{1} \geq \kappa_{2} \geq \cdots \geq \kappa_{n} \ \ \text{at $X_{0}$}.
\]
We now apply a perturbation argument. Near $X_{0}$, we define a new tensor $B$ by
\[
B(V_{1},V_{2}) = g(V_{1},V_{2})-g(V_{1},e_{1})g(V_{2},e_{1}),
\]
for tangent vectors $V_{1}$ and $V_{2}$. Let $B_{ij}=B(e_{i},e_{j})$. It is clear that
\[
\quad B_{ij} = \delta_{ij}B_{ii}, \quad B_{11} = 0, \quad B_{ii} = 1 \ \ \text{for $i>1$}.
\]
We define the matrix by $\ti{h}_{ij}=h_{ij}-B_{ij}$, and denote its eigenvalues by $\ti{\kappa}_{1}\geq\ti{\kappa}_{2}\geq\cdots\geq\ti{\kappa}_{n}$. It then follows that $\kappa_{1} \geq \ti{\kappa}_{1}$ near $X_{0}$ and
\[
\ti{\kappa}_{i} =
\begin{cases}
\kappa_{1}   &  \mbox{if $i=1$}, \\
\kappa_{i}-1 &  \mbox{if $i>1$},
\end{cases}
\quad \text{at $X_{0}$}.
\]
Thus $\ti{\kappa}_{1} > \ti{\kappa}_{2}$ at $X_{0}$, which implies that $\ti{\kappa}_{1}$ is smooth at $X_{0}$. We consider the perturbed quantity $\ti{Q}$ defined by
\[
\ti{Q} = \log\ti{\kappa}_{1}-Au,
\]
which still achieves a local maximum at $X_{0}$. From now on, all the calculations will be carried out at $X_{0}$. For any $1\leq i\leq n$, since $\ti{\kappa}_{1}=\kappa_{1}$ at $X_{0}$, we have
\begin{equation}\label{ti Q i}
0 = \ti{Q}_{i}
= \frac{\ti{\kappa}_{1,i}}{\ti{\kappa}_{1}}-Au_{i}
= \frac{\ti{\kappa}_{1,i}}{\kappa_{1}}-Au_{i}
\end{equation}
and
\begin{equation}\label{sigma k ii ti Q ii}
0 \geq \sigma_{k}^{ii}\ti{Q}_{ii}
= \sigma_{k}^{ii}(\log\ti{\kappa}_{1})_{ii}-A\sigma_{k}^{ii}u_{ii}.
\end{equation}

In the following lemma, we estimate each term in (\ref{sigma k ii ti Q ii}) and obtain an inequality.

\begin{lemma}\label{Calculation}
At $X_{0}$, we have
\[
\begin{split}
0 \geq {} & 2\sum_{p>1}\frac{\sigma_{k}^{11,pp}h_{11p}^{2}}{\kappa_{1}}
+2\sum_{p>1}\frac{\sigma_{k}^{11}h_{11p}^{2}}{\kappa_{1}(\kappa_{1}-\ti{\kappa}_{p})}
-\frac{\sigma_{k}^{pp,qq}h_{pp1}h_{qq1}}{\kappa_{1}} \\
& +2\sum_{p>1}\frac{\sigma_{k}^{pp}h_{pp1}^{2}}{\kappa_{1}(\kappa_{1}-\ti{\kappa}_{p})}
-\frac{\sigma_{k}^{pp}h_{11p}^{2}}{\kappa_{1}^{2}}+\left(\frac{A}{C}-C\right)\sigma_{k}^{ii}h_{ii}^{2}-CA.
\end{split}
\]
\end{lemma}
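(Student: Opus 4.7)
The plan is to unpack the inequality $0 \ge \sigma_k^{ii}\ti{Q}_{ii}=\sigma_k^{ii}(\log\ti{\kappa}_1)_{ii}-A\sigma_k^{ii}u_{ii}$ from (\ref{sigma k ii ti Q ii}) by computing each piece at $X_0$ and combining. The two sides involve essentially disjoint geometric data ($u$ versus the top principal curvature), so I would treat them independently and then assemble one inequality in the stated form.

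I would first handle the support term. Gauss $X_{ij}=-h_{ij}\nu$ and Weingarten $\nu_i=h_{ij}e_j$ give $u_i=h_{ij}\langle X,e_j\rangle$ and, at $X_0$,
\[
u_{ii}=h_{iil}\langle X,e_l\rangle+\kappa_i-u\kappa_i^2.
\]
Contracting with $\sigma_k^{ii}$, using $\sigma_k^{ii}h_{iil}=\nabla_l f$ (by differentiating the equation), $\sigma_k^{ii}\kappa_i=kf$, together with $u\ge 1/C$ and $|\nabla_l f|\le C(1+\kappa_1)$, yields $-A\sigma_k^{ii}u_{ii}\ge \frac{A}{C}\sigma_k^{ii}h_{ii}^2-CA(1+\kappa_1)$. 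The stray $CA\kappa_1$ can be absorbed into half of the quadratic term via Cauchy-Schwarz and $\sigma_k^{ii}h_{ii}^2\ge\sigma_k^{11}\kappa_1^2$, giving the desired $(\frac{A}{C}-C)\sigma_k^{ii}h_{ii}^2-CA$.

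For the curvature term I would split
\[
\sigma_k^{ii}(\log\ti{\kappa}_1)_{ii}=\frac{\sigma_k^{ii}\ti{\kappa}_{1,ii}}{\kappa_1}-\frac{\sigma_k^{ii}h_{11i}^{\,2}}{\kappa_1^{\,2}},
\]
which already produces the term $-\sigma_k^{pp}h_{11p}^{\,2}/\kappa_1^{\,2}$, using that $\ti{\kappa}_{1,i}=\ti{h}_{11,i}=h_{11,i}=h_{11i}$ at $X_0$ (the parallel frame makes $B_{ij,k}=0$ there). For $\ti{\kappa}_{1,ii}$, I would apply the classical second-variation formula for a simple top eigenvalue,
\[
\ti{\kappa}_{1,ii}=\ti{h}_{11,ii}+2\sum_{p>1}\frac{\ti{h}_{1p,i}^{\,2}}{\kappa_1-\ti{\kappa}_p},
\]
keep only $i=1$ and $i=p$ in the outer $\sigma_k^{ii}$-sum (other contributions are nonnegative and can be dropped), and use the Codazzi symmetry $h_{1p,1}=h_{11p}$, $h_{pp,1}=h_{pp1}$ to produce the two good gradient terms. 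For $\sigma_k^{ii}\ti{h}_{11,ii}$, the commutation formula (\ref{Commutation formula}) with $p=q=1$ gives $h_{11,ii}=h_{ii,11}+\kappa_1\kappa_i(\kappa_1-\kappa_i)$, whose contraction with $\sigma_k^{ii}$ equals $k\kappa_1^2 f-\kappa_1\sigma_k^{ii}h_{ii}^2$; after division by $\kappa_1$ this accounts for the extra $-\sigma_k^{ii}h_{ii}^2$ in the coefficient $(\frac{A}{C}-C)$. Differentiating $\sigma_k=f$ twice in direction $e_1$ yields $\sigma_k^{ii}h_{ii,11}=-\sigma_k^{ij,pq}h_{ij,1}h_{pq,1}+f_{11}$, and unpacking $\sigma_k^{ij,pq}$ from Section 2 decomposes the quadratic piece into the concave contribution $-\sigma_k^{pp,qq}h_{pp1}h_{qq1}$ plus $\sum_{i\ne p}\sigma_{k-2}(\kappa|ip)h_{ip,1}^{\,2}$. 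The $i=1$ slice of the latter equals $2\sum_{p>1}\sigma_k^{11,pp}h_{11p}^{\,2}$ by Codazzi, while the $i,p>1$ slices are positive and can be discarded. The remaining bounded residuals, $f_{11}$ and the second-derivative error $B_{11,ii}$ from the perturbation (controlled through the Gauss equation by $R\sim h^2$), feed into $-CA$ and the quadratic term.

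The main obstacle is purely bookkeeping: keeping straight, through repeated use of the Codazzi symmetry $h_{abc}=h_{bca}=h_{cab}$, which third-order combinations survive with good signs (those involving $h_{11p}$ or $h_{pp1}$) versus the single concave piece $-\sigma_k^{pp,qq}h_{pp1}h_{qq1}/\kappa_1$, and verifying that every stray error term -- the $CA\kappa_1$ residue from $u$, the $f_{11}$ piece, the second derivatives of the perturbation tensor $B$, and the discarded nonnegative off-diagonals -- really is absorbable into $-CA$ or into the good quadratic $\frac{A}{C}\sigma_k^{ii}h_{ii}^2$.
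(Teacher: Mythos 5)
Your structural treatment of the third-order terms coming from the eigenvalue perturbation and the off-diagonal part of $\sigma_{k}^{ij,pq}$ matches the paper, but the crucial step is missing: you declare at the outset that the support-function side and the curvature side "involve essentially disjoint geometric data" and can be handled independently, and this is exactly what fails. When $f$ depends on $\nu$, differentiating the equation twice in the $e_{1}$-direction produces, on the curvature side, the uncontrolled term
\[
\frac{1}{\kappa_{1}}\sum_{p}h_{p11}\,(d_{\nu}f)(e_{p}),
\]
which is of third order in $h$ and is \emph{not} a bounded residual, contrary to your description of $f_{11}$. Simultaneously, contracting $u_{ii}$ with $\sigma_{k}^{ii}$ and inserting the once-differentiated equation $\sigma_{k}^{ii}h_{iip}=h_{pp}(d_{\nu}f)(e_{p})+(d_{X}f)(e_{p})$ produces, on the support side, the term
\[
-A\sum_{p}h_{pp}\,(d_{\nu}f)(e_{p})\,\langle e_{p},X\rangle,
\]
which is $O(A\kappa_{1})$ and likewise not absorbable. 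The paper's whole point is that these two bad terms cancel \emph{exactly} via the first-order critical point condition $\ti{Q}_{p}=0$, which together with Codazzi and $u_{p}=h_{pp}\langle e_{p},X\rangle$ gives $h_{p11}/\kappa_{1}=Ah_{pp}\langle e_{p},X\rangle$; see the paper's (\ref{Calculation eqn 11}) and (\ref{Calculation eqn 8}). By treating the two sides separately you lose this cancellation and cannot recover it.

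Your proposed fallback -- absorbing the stray $CA\kappa_{1}$ into $\frac{A}{C}\sigma_{k}^{ii}h_{ii}^{2}$ by Cauchy--Schwarz together with $\sigma_{k}^{ii}h_{ii}^{2}\geq\sigma_{k}^{11}\kappa_{1}^{2}$ -- also does not work. One only has $\sigma_{k}^{11}\kappa_{1}^{2}\gtrsim\kappa_{1}$ (this is essentially the Chou--Wang bound, the paper's (\ref{Calculation eqn 9})), not $\gtrsim\kappa_{1}^{2}$: when $\kappa_{1}$ is large and the remaining principal curvatures are small with $\sigma_{k}=f$ fixed, $\sigma_{k}^{11}\sim f/\kappa_{1}$. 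So both $CA\kappa_{1}$ and $\frac{A}{C}\sigma_{k}^{ii}h_{ii}^{2}$ are of order $A\kappa_{1}$ with fixed, uncooperative constants, and increasing $A$ does not help since both scale linearly in $A$. The paper can afford a $-C\kappa_{1}$ term (with no factor of $A$) because after absorbing it via (\ref{Calculation eqn 9}) the coefficient becomes $\frac{A}{C}-C$, which is then made positive by choosing $A$ large; a $-CA\kappa_{1}$ term would wreck this. In short: you need the critical point condition to kill the $d_{\nu}f$ terms before estimating anything, and that is the one idea the proposal is missing.
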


\begin{proof}
First, let us recall the first and second derivatives of $\ti{\kappa}_{1}$ at $X_{0}$ (see e.g. \cite{Spruck05}):
\[
\begin{split}
\ti{\kappa}_{1}^{pq}
:= {} & \frac{\de\ti{\kappa}_{1}}{\de\ti{h}_{pq}} = \delta_{1p}\delta_{1q}, \\
\ti{\kappa}_{1}^{pq,rs}
:= {} & \frac{\de^{2}\ti{\kappa}_{1}}{\de\ti{h}_{pq}\de\ti{h}_{rs}}
= (1-\delta_{1p})\frac{\delta_{1q}\delta_{1r}\delta_{ps}}{\ti{\kappa}_{1}-\ti{\kappa}_{p}}
+ (1-\delta_{1r})\frac{\delta_{1s}\delta_{1p}\delta_{qr}}{\ti{\kappa}_{1}-\ti{\kappa}_{r}}.
\end{split}
\]
We compute
\[
\begin{split}
\ti{\kappa}_{1,i} = {} & \ti{\kappa}_{1}^{pq}\ti{h}_{pqi} = \ti{h}_{11i}, \\
\ti{\kappa}_{1,ii} = {} & \ti{\kappa}_{1}^{pq}\ti{h}_{pqii}+\ti{\kappa}_{1}^{pq,rs}\ti{h}_{pqi}\ti{h}_{rsi}
= \ti{h}_{11ii}+2\sum_{p>1}\frac{\ti{h}_{1pi}^{2}}{\kappa_{1}-\ti{\kappa}_{p}},
\end{split}
\]
where we used $\ti{\kappa}_{1}=\kappa_{1}$ at $X_{0}$. Using the definition of tensor $B$ and $(D_{e_{i}}e_{j})(X_{0})=0$, we see that
\[
B_{ij,p} = 0, \quad
B_{11,ii} = 0 \ \ \text{at $X_{0}$}.
\]
Combining this with $\ti{h}_{ij}=h_{ij}-B_{ij}$, we obtain
\[
\ti{h}_{ijp} = h_{ijp}, \quad \ti{h}_{11ii} = h_{11ii} \ \ \text{at $X_{0}$}.
\]
It then follows that
\begin{equation}\label{Calculation eqn 1}
\ti{\kappa}_{1,i} = h_{11i}, \quad
\ti{\kappa}_{1,ii} = h_{11ii}+2\sum_{p>1}\frac{h_{1pi}^{2}}{\kappa_{1}-\ti{\kappa}_{p}}.
\end{equation}

For the term $\sigma_{k}^{ii}(\log\ti{\kappa}_{1})_{ii}$ in (\ref{sigma k ii ti Q ii}), using (\ref{Calculation eqn 1}) and $\ti{\kappa}_{1}=\kappa_{1}$ at $X_{0}$, we compute
\begin{equation}\label{Calculation eqn 2}
\begin{split}
\sigma_{k}^{ii}(\log\ti{\kappa}_{1})_{ii}
= {} & \frac{\sigma_{k}^{ii}\ti{\kappa}_{1,ii}}{\ti{\kappa}_{1}}-\frac{\sigma_{k}^{ii}\ti{\kappa}_{1,i}^{2}}{\ti{\kappa}_{1}^{2}} \\
= {} & \frac{\sigma_{k}^{ii}h_{11ii}}{\kappa_{1}}+2\sum_{p>1}\frac{\sigma_{k}^{ii}h_{1pi}^{2}}{\kappa_{1}(\kappa_{1}-\ti{\kappa}_{p})}
-\frac{\sigma_{k}^{ii}h_{11i}^{2}}{\kappa_{1}^{2}}.
\end{split}
\end{equation}
By (\ref{Commutation formula}), we have
\begin{equation}\label{Calculation eqn 10}
\begin{split}
\sigma_{k}^{ii}h_{11ii} = {} & \sigma_{k}^{ii}h_{ii11}+\sigma_{k}^{ii}(h_{i1}^{2}-h_{ii}h_{11})h_{ii}+\sigma_{k}^{ii}(h_{11}h_{ii}-h_{i1}^{2})h_{11} \\
= {} & \sigma_{k}^{ii}h_{ii11}-\sigma_{k}^{ii}h_{ii}^{2}h_{11}+\sigma_{k}^{ii}h_{ii}h_{11}^{2} \\
= {} & \sigma_{k}^{ii}h_{ii11}-\sigma_{k}^{ii}h_{ii}^{2}h_{11}+kfh_{11}^{2}.
\end{split}
\end{equation}
where we used
\begin{equation}\label{Calculation eqn 7}
\sum_{i}\sigma_{k}^{ii}h_{ii}
= \sum_{i}\kappa_{i}\sigma_{k-1}(\kappa|i)
= k\sigma_{k}(\kappa) = kf.
\end{equation}
On the other hand, differentiating (\ref{Curvature equation}) twice, we obtain
\[
\sigma_{k}^{ii}h_{ii11} \geq -\sigma_{k}^{ij,pq}h_{ij1}h_{pq1}+\sum_{p}h_{p11}(d_{\nu}f)(e_{p})-Ch_{11}^{2}-C.
\]
Combining this with (\ref{Calculation eqn 10}),
\[
\sigma_{k}^{ii}h_{11ii} \geq -\sigma_{k}^{ij,pq}h_{ij1}h_{pq1}+\sum_{p}h_{p11}(d_{\nu}f)(e_{p})-\sigma_{k}^{ii}h_{ii}^{2}h_{11}-Ch_{11}^{2}-C.
\]
Substituting this into (\ref{Calculation eqn 2}),
\begin{equation}\label{Calculation eqn 4}
\begin{split}
\sigma_{k}^{ii}(\log\ti{\kappa}_{1})_{ii}
\geq {} & -\frac{\sigma_{k}^{ij,pq}h_{ij1}h_{pq1}}{\kappa_{1}}+2\sum_{p>1}\frac{\sigma_{k}^{ii}h_{1pi}^{2}}{\kappa_{1}(\kappa_{1}-\ti{\kappa}_{p})}
-\frac{\sigma_{k}^{pp}h_{11p}^{2}}{\kappa_{1}^{2}} \\
& +\frac{1}{\kappa_{1}}\sum_{p}h_{p11}(d_{\nu}f)(e_{p})-\sigma_{k}^{ii}h_{ii}^{2}-Ch_{11}-C,
\end{split}
\end{equation}
assuming without loss of generality that $\kappa_{1}\geq 1$.

By Guass formula, Weingarten equation and Codazzi formula, we see that
\[
u_{ii} = \sum_{p}h_{iip}\langle e_{p},X\rangle-uh_{ii}^{2}+h_{ii}.
\]
For the term $-A\sigma_{k}^{ii}u_{ii}$ in (\ref{sigma k ii ti Q ii}), we compute
\begin{equation}\label{Calculation eqn 5}
\begin{split}
-A\sigma_{k}^{ii}u_{ii}
= {} & -A\sum_{p}\sigma_{k}^{ii}h_{iip}\langle e_{p},X\rangle+Au\sigma_{k}^{ii}h_{ii}^{2}-A\sigma_{k}^{ii}h_{ii} \\
\geq {} & -A\sum_{p}\sigma_{k}^{ii}h_{iip}\langle e_{p},X\rangle+\frac{A\sigma_{k}^{ii}h_{ii}^{2}}{C}-Akf,
\end{split}
\end{equation}
where we used $u\geq\frac{1}{C}$ and (\ref{Calculation eqn 7}). Differentiating (\ref{Curvature equation}), we obtain
\[
\sigma_{k}^{ii}h_{iip} = h_{pp}(d_{\nu}f)(e_{p})+(d_{X}f)(e_{p}).
\]
Substituting this into (\ref{Calculation eqn 5}), we have
\begin{equation}\label{Calculation eqn 6}
-A\sigma_{k}^{ii}u_{ii}
\geq -A\sum_{p}h_{pp}(d_{\nu}f)(e_{p})\langle e_{p},X\rangle+\frac{A\sigma_{k}^{ii}h_{ii}^{2}}{C}-CA.
\end{equation}

Combining (\ref{sigma k ii ti Q ii}), (\ref{Calculation eqn 4}) and (\ref{Calculation eqn 6}), we obtain
\begin{equation}\label{Calculation eqn 3}
\begin{split}
0 \geq {} & F^{ii}\ti{Q}_{ii} \\
\geq {} & -\frac{\sigma_{k}^{ij,pq}h_{ij1}h_{pq1}}{\kappa_{1}}+2\sum_{p>1}\frac{\sigma_{k}^{ii}h_{1pi}^{2}}{\kappa_{1}(\kappa_{1}-\ti{\kappa}_{p})}
-\frac{\sigma_{k}^{pp}h_{11p}^{2}}{\kappa_{1}^{2}} \\
& +\frac{1}{\kappa_{1}}\sum_{p}h_{p11}(d_{\nu}f)(e_{p})-A\sum_{p}h_{pp}(d_{\nu}f)(e_{p})\langle e_{p},X\rangle \\
& +\left(\frac{A}{C}-1\right)\sigma_{k}^{ii}h_{ii}^{2}-Ch_{11}-CA.
\end{split}
\end{equation}
Using (\ref{ti Q i}), (\ref{Calculation eqn 1}) and $u_{p}=h_{pp}\langle e_{p},X\rangle$, for $1\leq p\leq n$, we have
\begin{equation}\label{Calculation eqn 11}
\frac{h_{11p}}{\kappa_{1}}-Ah_{pp}\langle e_{p},X\rangle = 0.
\end{equation}
Combining this with Codazzi formula, it is clear that
\begin{equation}\label{Calculation eqn 8}
\frac{1}{\kappa_{1}}\sum_{p}h_{p11}(d_{\nu}f)(e_{p})-A\sum_{p}h_{pp}(d_{\nu}f)(e_{p})\langle e_{p},X\rangle = 0.
\end{equation}
By \cite[Lemma 3.1]{CW01}, we have
\begin{equation}\label{Calculation eqn 9}
\kappa_{1} = h_{11} \leq C\sigma_{k}^{11}h_{11}^{2}.
\end{equation}
Substituting (\ref{Calculation eqn 8}) and (\ref{Calculation eqn 9}) into (\ref{Calculation eqn 3}),
\[
0 \geq  -\frac{\sigma_{k}^{ij,pq}h_{ij1}h_{pq1}}{\kappa_{1}}+2\sum_{p>1}\frac{\sigma_{k}^{ii}h_{1pi}^{2}}{\kappa_{1}(\kappa_{1}-\ti{\kappa}_{p})}
-\frac{\sigma_{k}^{pp}h_{11p}^{2}}{\kappa_{1}^{2}}+\left(\frac{A}{C}-C\right)\sigma_{k}^{ii}h_{ii}^{2}-CA.
\]
Combining this with
\[
\begin{split}
& -\frac{\sigma_{k}^{ij,pq}h_{ij1}h_{pq1}}{\kappa_{1}}+2\sum_{p>1}\frac{\sigma_{k}^{ii}h_{1pi}^{2}}{\kappa_{1}(\kappa_{1}-\ti{\kappa}_{p})} \\
\geq {} & -\frac{\sigma_{k}^{pp,qq}h_{pp1}h_{qq1}}{\kappa_{1}}
+2\sum_{p>1}\frac{\sigma_{k}^{11,pp}h_{11p}^{2}}{\kappa_{1}}
+2\sum_{p>1}\frac{\sigma_{k}^{11}h_{1p1}^{2}}{\kappa_{1}(\kappa_{1}-\ti{\kappa}_{p})}
+2\sum_{p>1}\frac{\sigma_{k}^{pp}h_{1pp}^{2}}{\kappa_{1}(\kappa_{1}-\ti{\kappa}_{p})}
\end{split}
\]
and Codazzi formula, we obtain Lemma \ref{Calculation}.
\end{proof}

\begin{lemma}\label{Lemma 1}
At $X_{0}$, we have
\[
\sum_{p>1}\frac{\sigma_{k}^{pp}h_{11p}^{2}}{\kappa_{1}^{2}}
\leq 2\sum_{p>1}\frac{\sigma_{k}^{11,pp}h_{11p}^{2}}{\kappa_{1}}
+2\sum_{p>1}\frac{\sigma_{k}^{11}h_{11p}^{2}}{\kappa_{1}(\kappa_{1}-\ti{\kappa}_{p})},
\]
assuming without loss of generality that $\kappa_{1}\geq 1$.
\end{lemma}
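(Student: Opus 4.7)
Since the factors $h_{11p}^2$ are nonnegative, it suffices to establish the pointwise inequality
\[
\frac{\sigma_{k}^{pp}}{\kappa_{1}^{2}}
\leq \frac{2\sigma_{k}^{11,pp}}{\kappa_{1}}
+ \frac{2\sigma_{k}^{11}}{\kappa_{1}(\kappa_{1}-\ti{\kappa}_{p})}
\]
for each fixed $p>1$. The natural tool is the standard recursion for elementary symmetric polynomials applied with respect to the index $1$: if $p>1$, then
\[
\sigma_{k-1}(\kappa|p) = \sigma_{k-1}(\kappa|1p) + \kappa_{1}\,\sigma_{k-2}(\kappa|1p),
\]
which in our notation reads $\sigma_{k}^{pp} = \sigma_{k-1}(\kappa|1p) + \kappa_{1}\,\sigma_{k}^{11,pp}$. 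Dividing by $\kappa_{1}^{2}$ and subtracting $\sigma_{k}^{11,pp}/\kappa_{1}$, the target inequality becomes
\[
\frac{\sigma_{k-1}(\kappa|1p)}{\kappa_{1}^{2}}
\leq \frac{\sigma_{k}^{11,pp}}{\kappa_{1}} + \frac{2\sigma_{k}^{11}}{\kappa_{1}(\kappa_{1}-\ti{\kappa}_{p})}.
\]

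Next, I would exploit convexity of $M$: all $\kappa_{i}\geq 0$, so every $\sigma_{j}(\kappa|\cdots)\geq 0$ appearing above is nonnegative. In particular $\sigma_{k}^{11,pp}=\sigma_{k-2}(\kappa|1p)\geq 0$, so I can simply drop this term on the right-hand side. What remains to prove is
\[
\sigma_{k-1}(\kappa|1p)\,(\kappa_{1}-\ti{\kappa}_{p}) \leq 2\kappa_{1}\,\sigma_{k-1}(\kappa|1).
\]
Applying the same recursion one more time, now with respect to $p$, gives $\sigma_{k-1}(\kappa|1) = \sigma_{k-1}(\kappa|1p) + \kappa_{p}\,\sigma_{k-2}(\kappa|1p)\geq \sigma_{k-1}(\kappa|1p)$, so the right-hand side is at least $2\kappa_{1}\,\sigma_{k-1}(\kappa|1p)$.

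It therefore suffices to show $\kappa_{1}-\ti{\kappa}_{p}\leq 2\kappa_{1}$. Since $p>1$, we have $\ti{\kappa}_{p}=\kappa_{p}-1$, so $\kappa_{1}-\ti{\kappa}_{p}=\kappa_{1}-\kappa_{p}+1\leq \kappa_{1}+1$, using $\kappa_{p}\geq 0$. The assumption $\kappa_{1}\geq 1$ then gives $\kappa_{1}+1\leq 2\kappa_{1}$, completing the chain. I do not expect any genuine obstacle here: the whole argument is a double application of the symmetric-function recursion followed by the two elementary comparisons $\sigma_{k-1}(\kappa|1p)\leq \sigma_{k-1}(\kappa|1)$ and $\kappa_{1}-\ti{\kappa}_{p}\leq 2\kappa_{1}$, both of which rest only on the convexity hypothesis $\kappa_{i}\geq 0$ and the normalization $\kappa_{1}\geq 1$.
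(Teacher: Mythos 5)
Your proof is correct, and it uses the same underlying ingredients as the paper (the recursion $\sigma_{k-1}(\kappa|p)=\sigma_{k-1}(\kappa|1p)+\kappa_1\sigma_{k-2}(\kappa|1p)$, the identity $\sigma_k^{11,pp}=\sigma_{k-2}(\kappa|1p)$, nonnegativity of all minor symmetric functions from convexity, and the elementary bound $\kappa_1-\ti{\kappa}_p\leq\kappa_1+1\leq 2\kappa_1$), but it organizes them more cleanly. The paper splits the index set into $I=\{p>1:\kappa_p=\kappa_1\}$ and its complement, because for $p\notin I$ it rewrites $\sigma_k^{11,pp}$ as the difference quotient $(\sigma_k^{pp}-\sigma_k^{11})/(\kappa_1-\kappa_p)$, which degenerates when $\kappa_p=\kappa_1$. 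By instead expanding $\sigma_k^{pp}=\sigma_{k-1}(\kappa|1p)+\kappa_1\sigma_k^{11,pp}$ directly, you never form that quotient, so the argument treats all $p>1$ uniformly and the case split disappears. The result is the same termwise bound $\sigma_k^{pp}/\kappa_1^2\leq 2\sigma_k^{11,pp}/\kappa_1+2\sigma_k^{11}/(\kappa_1(\kappa_1-\ti{\kappa}_p))$, and the lemma follows after multiplying by $h_{11p}^2\geq 0$ and summing; this is a modest but genuine simplification of the paper's proof.
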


\begin{proof}
We define
\[
I = \{ p \in \{2,3,\cdots,n\}~|~\kappa_{i}=\kappa_{1}\}.
\]

For $p\in I$, we have $\sigma_{k}^{pp}=\sigma_{k}^{11}$ and $\kappa_{1}-\ti{\kappa}_{p}=1$. Thus,
\begin{equation}\label{Lemma 1 eqn 1}
\sum_{p\in I}\frac{\sigma_{k}^{pp}h_{11p}^{2}}{\kappa_{1}^{2}}
= \frac{1}{\kappa_{1}}\sum_{p\in I}\frac{\sigma_{k}^{11}h_{11p}^{2}}{\kappa_{1}}
\leq \sum_{p\in I}\frac{\sigma_{k}^{11}h_{11p}^{2}}{\kappa_{1}(\kappa_{1}-\ti{\kappa}_{p})}.
\end{equation}

For $p\notin I$, since $\ti{\kappa}_{p}=\kappa_{p}-1$ and $\kappa_{p}>0$, then
\[
\kappa_{1}-\ti{\kappa}_{p} = \kappa_{1}-\kappa_{p}+1 \leq \kappa_{1}+1 \leq 2\kappa_{1},
\]
which implies
\begin{equation}\label{Lemma 1 eqn 2}
\sum_{p\notin I}\frac{\sigma_{k}^{11}h_{11p}^{2}}{\kappa_{1}^{2}}
\leq 2\sum_{p\notin I}\frac{\sigma_{k}^{11}h_{11p}^{2}}{\kappa_{1}(\kappa_{1}-\ti{\kappa}_{p})}.
\end{equation}
On the other hand, using $0<\kappa_{p}<\kappa_{1}$, we have
\[
\frac{\sigma_{k}^{pp}-\sigma_{k}^{11}}{\kappa_{1}^{2}}
\leq \frac{\sigma_{k}^{pp}-\sigma_{k}^{11}}{\kappa_{1}(\kappa_{1}-\kappa_{p})}
= \frac{\sigma_{k}^{11,pp}}{\kappa_{1}}.
\]
It then follows that
\begin{equation}\label{Lemma 1 eqn 3}
\sum_{p\notin I}\frac{(\sigma_{k}^{pp}-\sigma_{k}^{11})h_{11p}^{2}}{\kappa_{1}^{2}}
\leq \sum_{p\notin I}\frac{\sigma_{k}^{11,pp}h_{11p}^{2}}{\kappa_{1}}.
\end{equation}
Combining (\ref{Lemma 1 eqn 2}) and (\ref{Lemma 1 eqn 3}), we obtain
\begin{equation}\label{Lemma 1 eqn 4}
\sum_{p\notin I}\frac{\sigma_{k}^{pp}h_{11p}^{2}}{\kappa_{1}^{2}}
\leq 2\sum_{p\notin I}\frac{\sigma_{k}^{11}h_{11p}^{2}}{\kappa_{1}(\kappa_{1}-\ti{\kappa}_{p})}
+\sum_{p\notin I}\frac{\sigma_{k}^{11,pp}h_{11p}^{2}}{\kappa_{1}}.
\end{equation}
Then Lemma \ref{Lemma 1} follows from (\ref{Lemma 1 eqn 1}) and (\ref{Lemma 1 eqn 4}).
\end{proof}

The rest of the proof is very similar to \cite[Theorem 1.1]{GRW15}. For the reader's convenience, we give all the details here.

\begin{lemma}\label{Lemma 2}
For $\ve,\delta\in\left(0,\frac{1}{2}\right)$ and $1\leq l\leq k-1$, there exists a uniform constant $\delta'$ depending on $\ve$ and $\delta$ such that if $\kappa_{l}\geq\delta\kappa_{1}$ and $\kappa_{l+1}\leq\delta'\kappa_{1}$, then
\[
(1-2\ve)\frac{\sigma_{k}^{11}h_{111}^{2}}{\kappa_{1}^{2}} \leq -\frac{\sigma_{k}^{pp,qq}h_{pp1}h_{qq1}}{\kappa_{1}}
+2\sum_{p>1}\frac{\sigma_{k}^{pp}h_{pp1}^{2}}{\kappa_{1}(\kappa_{1}-\ti{\kappa}_{p})}+C\kappa_{1},
\]
for some uniform constant $C$.
\end{lemma}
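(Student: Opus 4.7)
The plan is to exploit the gap assumption $\kappa_l\geq\delta\kappa_1\gg\delta'\kappa_1\geq\kappa_{l+1}$ to split the indices $\{2,\ldots,n\}$ into a large set $L:=\{2,\ldots,l\}$ and a small set $S:=\{l+1,\ldots,n\}$, and to match the cross terms arising in $-\sigma_k^{pp,qq}h_{pp1}h_{qq1}/\kappa_1$ against the explicit positive quadratic $2\sum_{p>1}\sigma_k^{pp}h_{pp1}^2/[\kappa_1(\kappa_1-\tilde\kappa_p)]$ via Cauchy--Schwarz with carefully calibrated weights. The constant $\delta'$ will be chosen in terms of $\varepsilon$ and $\delta$ at the very end. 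The algebraic identity linking the second-derivative coefficients with the first-derivative ones is
$$\sigma_{k-2}(\kappa|1q)=\frac{\sigma_k^{qq}-\sigma_k^{11}}{\kappa_1-\kappa_q}\qquad(\kappa_q<\kappa_1),$$
obtained by pivoting $\sigma_k^{qq}=\sigma_{k-1}(\kappa|q)$ and $\sigma_k^{11}=\sigma_{k-1}(\kappa|1)$ about the variables $\kappa_1$ and $\kappa_q$ respectively and subtracting.

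First I would use $\sigma_k^{pp,pp}=0$ to expand
$$-\frac{\sigma_k^{pp,qq}h_{pp1}h_{qq1}}{\kappa_1}=-\frac{2}{\kappa_1}\sum_{q>1}\sigma_{k-2}(\kappa|1q)h_{111}h_{qq1}-\frac{1}{\kappa_1}\sum_{\substack{p,q>1\\p\neq q}}\sigma_{k-2}(\kappa|pq)h_{pp1}h_{qq1}.$$
The pure off-diagonal sum (the second one) is controlled from below by $-C\kappa_1$: the concavity of $\sigma_k^{1/k}$ on $\Gamma_k$ applied to the diagonal tensor $(h_{pp1})$ gives $\sigma_k^{pp,qq}h_{pp1}h_{qq1}\leq(1-\tfrac1k)(\sum_p\sigma_k^{pp}h_{pp1})^2/\sigma_k$; combining this with the once-differentiated equation $\sum_p\sigma_k^{pp}h_{pp1}=f_1=O(1+\kappa_1)$ and $\sigma_k=f\geq\inf f>0$, and isolating the $p,q>1$ piece, yields the desired $-C\kappa_1$. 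For each cross term $q>1$ I would apply Young's inequality with a parameter $\alpha_q\in(0,1)$:
$$-\frac{2\sigma_{k-2}(\kappa|1q)}{\kappa_1}h_{111}h_{qq1}\geq -\alpha_q\cdot\frac{2\sigma_k^{qq}h_{qq1}^2}{\kappa_1(\kappa_1-\tilde\kappa_q)}-\frac{\sigma_{k-2}(\kappa|1q)^2(\kappa_1-\tilde\kappa_q)}{2\alpha_q\sigma_k^{qq}\kappa_1}h_{111}^2.$$
Choosing the $\alpha_q$ so that $\sum_{q>1}\alpha_q\leq 1-\varepsilon$ absorbs the $h_{qq1}^2$ portions into the explicit positive quadratic, with $\varepsilon$ units of slack remaining.

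The principal obstacle is controlling the residual $h_{111}^2$ coefficient: using the algebraic identity above, this coefficient becomes
$$\Lambda:=\sum_{q>1}\frac{(\sigma_k^{qq}-\sigma_k^{11})^2(\kappa_1-\tilde\kappa_q)}{2\alpha_q\sigma_k^{qq}(\kappa_1-\kappa_q)^2\kappa_1},$$
and the task is to show $\Lambda h_{111}^2\leq\sigma_k^{11}h_{111}^2/\kappa_1^2+C\kappa_1$ with a suitable choice of $\alpha_q$'s, which, together with the $\varepsilon$-slack from Step 2, produces the factor $(1-2\varepsilon)$. I would split $\Lambda$ into $q\in L$ and $q\in S$: for $q\in L$, $\kappa_1-\kappa_q$ is comparable to $(1-\delta)\kappa_1$ when $\kappa_q<\kappa_1$, while when $\kappa_q=\kappa_1$ the perturbation guarantees $\kappa_1-\tilde\kappa_q\geq 1$; Newton--Maclaurin gives $\sigma_k^{qq}\leq C\sigma_k^{11}$ in this regime, so the $L$-contribution to $\Lambda$ is $O(C(\delta)/\kappa_1)$, producing $O(\kappa_1)$ after multiplying by $h_{111}^2\leq C\kappa_1^2$. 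For $q\in S$, $\kappa_1-\kappa_q\geq(1-\delta')\kappa_1$, and expanding $\sigma_k^{qq}-\sigma_k^{11}$ in $\kappa_q$ shows each summand has the form $\sigma_k^{qq}(1+O(\delta'))/(\alpha_q\kappa_1^3)$; with $\alpha_q$ chosen proportional to $\sigma_k^{qq}/\!\sum_{q\in S}\sigma_k^{qq}$ and the comparison $\sum_{q\in S}\sigma_k^{qq}\leq(1+C\delta')\kappa_1\sigma_k^{11}$ that follows from the gap assumption via Newton--Maclaurin, one gets $\sum_{q\in S}\Lambda_q\leq(1+C\delta')\sigma_k^{11}/\kappa_1^2$. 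Choosing $\delta'$ small enough in terms of $\varepsilon$ then yields the desired constant $(1-2\varepsilon)$. The precise calibration of $\delta'$ and the $\alpha_q$'s to pin down this constant is the technical heart of the argument and follows closely the computation of Guan--Ren--Wang in \cite{GRW15}.
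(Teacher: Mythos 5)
There is a fundamental gap in your plan: the lemma requires producing a \emph{positive} multiple of $\sigma_k^{11}h_{111}^2/\kappa_1^2$ out of the right-hand side, but your decomposition only yields \emph{negative} $h_{111}^2$ contributions. In your Step~2, Young's inequality applied to the cross terms $-\tfrac{2}{\kappa_1}\sigma_k^{11,qq}h_{111}h_{qq1}$ always produces a residual of the form $-\Lambda h_{111}^2$ with $\Lambda>0$; after absorbing the $h_{qq1}^2$ pieces into the explicit quadratic, you are left (at best) with $-\Lambda h_{111}^2+(\text{something}\ge 0)+C\kappa_1$, which is a \emph{lower} bound that still carries a strictly negative coefficient of $h_{111}^2$. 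Proving $\Lambda\le\sigma_k^{11}/\kappa_1^2$, as you aim to do, does not help --- it merely controls the \emph{size} of a term whose \emph{sign} is wrong. The positive quadratic $2\sum_{p>1}\sigma_k^{pp}h_{pp1}^2/[\kappa_1(\kappa_1-\tilde\kappa_p)]$ contains no $h_{111}^2$ term at all (the sum is over $p>1$), and $C\kappa_1$ is lower order, so neither can supply the needed positivity. In addition, your bound $-\tfrac{1}{\kappa_1}\sum_{p,q>1,\,p\ne q}\sigma_k^{pp,qq}h_{pp1}h_{qq1}\ge -C\kappa_1$ is also incorrect: the concavity inequality applied to the truncated vector $\xi=(0,h_{221},\dots,h_{nn1})$ gives an upper bound in terms of $(\sum_{p>1}\sigma_k^{pp}h_{pp1})^2$, and the differentiated equation only controls $\sum_{p\ge 1}\sigma_k^{pp}h_{pp1}=O(\kappa_1)$. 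Since $\sum_{p>1}\sigma_k^{pp}h_{pp1}=f_1-\sigma_k^{11}h_{111}$ and $h_{111}=A\kappa_1^2\langle e_1,X\rangle$ is of order $\kappa_1^2$ at the maximum point, this produces yet another negative $(\sigma_k^{11})^2h_{111}^2/(\kappa_1\sigma_k)$ contribution of the same size as the target, again of the wrong sign.

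The positivity in $h_{111}^2$ must come from the quadratic form $-\sigma_k^{pp,qq}h_{pp1}h_{qq1}$ as a whole, but it cannot be extracted by splitting it into a $(1,q)$ piece and a $(p,q>1)$ piece and then estimating each piece with Cauchy--Schwarz, because the individual pieces are not sign-definite and the positivity of the full form is lost. The paper's use of the inequality from \cite[(2.4)]{GRW15} (or \cite[Lemma~3.2]{GLL12}) is essential precisely here: it rewrites $-\sigma_k^{pp,qq}h_{pp1}h_{qq1}/\kappa_1+(\sum_p\sigma_k^{pp}h_{pp1})^2/(\kappa_1\sigma_k)$ as $\tfrac{\sigma_k}{\kappa_1\sigma_l^2}\bigl[\sum_p(\sigma_l^{pp}h_{pp1})^2+\sum_{p\ne q}(\sigma_l^{pp}\sigma_l^{qq}-\sigma_l\sigma_l^{pp,qq})h_{pp1}h_{qq1}\bigr]$, and it is the manifestly positive square $\tfrac{\sigma_k(\sigma_l^{11})^2}{\kappa_1\sigma_l^2}h_{111}^2$ in this expansion --- of the \emph{correct} sign --- that produces $(1-2\varepsilon)\sigma_k^{11}h_{111}^2/\kappa_1^2$ after the Newton-inequality estimates on the off-diagonal sum and the $\sigma_l\to\sigma_k$ comparison. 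Your proposal omits this mechanism, and without it the argument does not close.
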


\begin{proof}
Using \cite[(2.4)]{GRW15} (see also \cite[Lemma 3.2]{GLL12}), we have
\[
\begin{split}
& -\frac{\sigma_{k}^{pp,qq}h_{pp1}h_{qq1}}{\kappa_{1}}+\frac{\left(\sum_{p}\sigma_{k}^{pp}h_{pp1}\right)^{2}}{\kappa_{1}\sigma_{k}} \\
\geq {} & \frac{\sigma_{k}}{\kappa_{1}\sigma_{l}^{2}}\left[\left(\sum_{p}\sigma_{l}^{pp}h_{pp1}\right)^{2}-\sigma_{l}\sigma_{l}^{pp,qq}h_{pp1}h_{qq1}\right] \\
\geq {} & \frac{\sigma_{k}}{\kappa_{1}\sigma_{l}^{2}}\left[\sum_{p}\left(\sigma_{l}^{pp}h_{pp1}\right)^{2}
+\sum_{p\neq q}(\sigma_{l}^{pp}\sigma_{l}^{qq}-\sigma_{l}\sigma_{l}^{pp,qq})h_{pp1}h_{qq1}\right].
\end{split}
\]
Differentiating (\ref{Curvature equation}), we have
\[
\sum_{p}\sigma_{k}^{pp}h_{pp1} = h_{11}(d_{\nu}f)(e_{1})+(d_{X}f)(e_{1}),
\]
which implies
\[
\frac{\left(\sum_{p}\sigma_{k}^{pp}h_{pp1}\right)^{2}}{\kappa_{1}\sigma_{k}}
= \frac{\Big(h_{11}(d_{\nu}f)(e_{1})+(d_{X}f)(e_{1})\Big)^{2}}{\kappa_{1}f} \leq C\kappa_{1},
\]
assuming without loss of generality that $\kappa_{1}\geq 1$. Thus,
\begin{equation}\label{Lemma 2 eqn 1}
\begin{split}
& -\frac{\sigma_{k}^{pp,qq}h_{pp1}h_{qq1}}{\kappa_{1}}+C\kappa_{1} \\
\geq {} & \frac{\sigma_{k}}{\kappa_{1}\sigma_{l}^{2}}\left[\sum_{p}\left(\sigma_{l}^{pp}h_{pp1}\right)^{2}
+\sum_{p\neq q}(\sigma_{l}^{pp}\sigma_{l}^{qq}-\sigma_{l}\sigma_{l}^{pp,qq})h_{pp1}h_{qq1}\right].
\end{split}
\end{equation}

We claim
\begin{equation}\label{Lemma 2 eqn 2}
\begin{split}
& \sum_{p\neq q}(\sigma_{l}^{pp}\sigma_{l}^{qq}-\sigma_{l}\sigma_{l}^{pp,qq})h_{pp1}h_{qq1} \\
\geq {} & -\ve\sum_{p\leq l}(\sigma_{l}^{pp}h_{pp1})^{2}-\frac{C}{\ve}\sum_{p>l}(\sigma_{l}^{pp}h_{pp1})^{2}.
\end{split}
\end{equation}
When $l=1$, we have $\sigma_{1}^{pp,qq}=0$. Then the claim (\ref{Lemma 2 eqn 2}) follows from the Cauchy-Schwarz inequality. When $l>1$, we split the left-handed side of (\ref{Lemma 2 eqn 2}) into three terms:
\begin{equation}\label{Lemma 2 eqn 3}
\begin{split}
& \sum_{p\neq q}(\sigma_{l}^{pp}\sigma_{l}^{qq}-\sigma_{l}\sigma_{l}^{pp,qq})h_{pp1}h_{qq1} \\
= {} & \sum_{p\neq q;\,p,q\leq l}(\sigma_{l}^{pp}\sigma_{l}^{qq}-\sigma_{l}\sigma_{l}^{pp,qq})h_{pp1}h_{qq1} \\
& +2\sum_{p\leq l;\,q>l}(\sigma_{l}^{pp}\sigma_{l}^{qq}-\sigma_{l}\sigma_{l}^{pp,qq})h_{pp1}h_{qq1} \\
& +\sum_{p\neq q;\,p,q>l}(\sigma_{l}^{pp}\sigma_{l}^{qq}-\sigma_{l}\sigma_{l}^{pp,qq})h_{pp1}h_{qq1} \\
=: {} & T_{1}+T_{2}+T_{3}.
\end{split}
\end{equation}
By direct calculation and Newton's inequality, for $p\neq q$, we obtain (see \cite[(4.22)]{GRW15})
\begin{equation}\label{Lemma 2 eqn 4}
\sigma_{l}^{pp}\sigma_{l}^{qq}-\sigma_{l}\sigma_{l}^{pp,qq}
= \sigma_{l-1}^{2}(\kappa|pq)-\sigma_{l}(\kappa|pq)\sigma_{l-2}(\kappa|pq) \geq 0.
\end{equation}
Thus, for the term $T_{1}$ in (\ref{Lemma 2 eqn 3}), we have
\[
T_{1} \geq -\sum_{p\neq q;\,p,q\leq l}\sigma_{l-1}^{2}(\kappa|pq)|h_{pp1}h_{qq1}|.
\]
For $p\neq q$ and $p,q\leq l$, since $\kappa_{p},\kappa_{q}\geq\kappa_{l}\geq\delta\kappa_{1}$, $\kappa_{l+1}\leq\delta'\kappa_{1}$ and $\kappa_{i}>0$ for all $i$, then
\[
\sigma_{l-1}(\kappa|pq) \leq \frac{C\kappa_{1}\cdots\kappa_{l+1}}{\kappa_{p}\kappa_{q}}
\leq \frac{C\kappa_{l+1}}{\kappa_{q}}\cdot\frac{\kappa_{1}\cdots\kappa_{l}}{\kappa_{p}} \leq \frac{C\delta'\sigma_{l}^{pp}}{\delta}.
\]
Similarly, we have
\[
\sigma_{l-1}(\kappa|pq) \leq \frac{C\delta'\sigma_{l}^{qq}}{\delta}.
\]
Choosing $\delta'$ sufficiently small,
\[
\sigma_{l-1}^{2}(\kappa|pq) \leq \left(\frac{C\delta'}{\delta}\right)^{2}\sigma_{l}^{pp}\sigma_{l}^{qq}
\leq \frac{\ve\sigma_{l}^{pp}\sigma_{l}^{qq}}{2}.
\]
It then follows that
\begin{equation}\label{Lemma 2 eqn 5}
T_{1} \geq -\frac{\ve}{2}\sum_{p\neq q;\,p,q\leq l}|\sigma_{l}^{pp}h_{pp1}|\cdot|\sigma_{l}^{qq}h_{qq1}|
\geq -\frac{\ve}{2}\sum_{p\leq l}(\sigma_{l}^{pp}h_{pp1})^{2}.
\end{equation}
For the terms $T_{2}$ and $T_{3}$ in (\ref{Lemma 2 eqn 3}), using (\ref{Lemma 2 eqn 4}) and the Cauchy-Schwarz inequality,
\begin{equation}\label{Lemma 2 eqn 6}
\begin{split}
T_{2}+T_{3} \geq {} & -2\sum_{p\leq l;\,q>l}\sigma_{l}^{pp}\sigma_{l}^{qq}|h_{pp1}h_{qq1}|
-\sum_{p\neq q;\,p,q>l}\sigma_{l}^{pp}\sigma_{l}^{qq}|h_{pp1}h_{qq1}| \\
\geq {} & -\frac{\ve}{2}\sum_{p\leq l}(\sigma_{l}^{pp}h_{pp1})^{2}-\frac{C}{\ve}\sum_{p>l}(\sigma_{l}^{pp}h_{pp1})^{2}.
\end{split}
\end{equation}
Substituting (\ref{Lemma 2 eqn 5}) and (\ref{Lemma 2 eqn 6}) into (\ref{Lemma 2 eqn 3}), we obtain the claim (\ref{Lemma 2 eqn 2}).

Combining (\ref{Lemma 2 eqn 1}) and (\ref{Lemma 2 eqn 2}),
\begin{equation}\label{Lemma 2 eqn 7}
\begin{split}
& (1-\ve)\frac{\sigma_{k}(\sigma_{l}^{11})^{2}h_{111}^{2}}{\kappa_{1}\sigma_{l}^{2}} \leq
(1-\ve)\frac{\sigma_{k}}{\kappa_{1}\sigma_{l}^{2}}\sum_{p\leq l}\left(\sigma_{l}^{pp}h_{pp1}\right)^{2} \\
\leq {} & -\frac{\sigma_{k}^{pp,qq}h_{pp1}h_{qq1}}{\kappa_{1}}+\frac{C\sigma_{k}}{\ve\kappa_{1}\sigma_{l}^{2}}\sum_{p>l}(\sigma_{l}^{pp}h_{pp1})^{2}+C\kappa_{1}.
\end{split}
\end{equation}
Since $\kappa_{i}>0$ for all $i$ and $\kappa_{l+1}\leq\delta'\kappa_{1}$, we have
\[
\frac{\sigma_{k}}{\kappa_{1}\sigma_{k}^{11}} = \frac{\kappa_{1}\sigma_{k}^{11}+\sigma_{k}(\kappa|1)}{\kappa_{1}\sigma_{k}^{11}} \geq 1
\]
and
\[
\frac{\kappa_{1}\sigma_{l}^{11}}{\sigma_{l}}
= 1-\frac{\sigma_{l}(\kappa|1)}{\sigma_{l}}
\geq 1-\frac{C\kappa_{2}\cdots\kappa_{l+1}}{\kappa_{1}\cdots\kappa_{l}}
= 1-\frac{C\kappa_{l+1}}{\kappa_{1}} \geq 1-C\delta'.
\]
Thus, at the expense of decreasing $\delta'$, we obtain
\begin{equation}\label{Lemma 2 eqn 8}
\begin{split}
& (1-\ve)\frac{\sigma_{k}(\sigma_{l}^{11})^{2}h_{111}^{2}}{\kappa_{1}\sigma_{l}^{2}}
= (1-\ve)\frac{\sigma_{k}^{11}}{\kappa_{1}^{2}}\cdot\frac{\sigma_{k}}{\kappa_{1}\sigma_{k}^{11}}
\cdot\left(\frac{\kappa_{1}\sigma_{l}^{11}}{\sigma_{l}}\right)^{2}h_{111}^{2} \\
\geq {} & (1-\ve)(1-C\delta')^{2}\frac{\sigma_{k}^{11}h_{111}^{2}}{\kappa_{1}^{2}}
\geq (1-2\ve)\frac{\sigma_{k}^{11}h_{111}^{2}}{\kappa_{1}^{2}}.
\end{split}
\end{equation}
On the other hand, using $\kappa_{l}\geq\delta\kappa_{1}$ and $\kappa_{i}>0$ for all $i$, for $p>l$, we have
\[
\frac{\sigma_{l}^{pp}}{\sigma_{l}} \leq \frac{C\kappa_{1}\cdots\kappa_{l-1}}{\kappa_{1}\cdots\kappa_{l}}
\leq \frac{C}{\kappa_{l}} \leq \frac{C}{\delta\kappa_{1}}.
\]
This implies
\begin{equation}\label{Lemma 2 eqn 9}
\frac{C\sigma_{k}}{\ve\kappa_{1}\sigma_{l}^{2}}\sum_{p>l}(\sigma_{l}^{pp}h_{pp1})^{2}
= \frac{C}{\ve}\sum_{p>l}\left(\frac{\sigma_{l}^{pp}}{\sigma_{l}}\right)^{2}\cdot\frac{\sigma_{k}h_{pp1}^{2}}{\kappa_{1}}
\leq \frac{C}{\ve\delta^{2}}\sum_{p>l}\frac{\sigma_{k}h_{pp1}^{2}}{\kappa_{1}^{3}}.
\end{equation}
Since $\kappa_{l+1}\leq\delta'\kappa_{1}$ and $\kappa_{i}>0$ for all $i$, then for $l<p\leq k$,
\[
\frac{\sigma_{k}}{\kappa_{1}} \leq \frac{\delta'\sigma_{k}}{\kappa_{p}} \leq \frac{C\delta'\kappa_{1}\cdots\kappa_{k}}{\kappa_{p}} \leq C\delta'\sigma_{k}^{pp}.
\]
For $p>k$,
\[
\frac{\sigma_{k}}{\kappa_{1}} \leq \frac{\delta'\sigma_{k}}{\kappa_{k}} \leq C\kappa_{1}\cdots\kappa_{k-1} \leq C\delta'\sigma_{k}^{pp}.
\]
So $\frac{\sigma_{k}}{\kappa_{1}} \leq C\delta'\sigma_{k}^{pp}$ for $p>l$. It then follows that
\begin{equation}\label{Lemma 2 eqn 10}
\frac{C}{\ve\delta^{2}}\sum_{p>l}\frac{\sigma_{k}h_{pp1}^{2}}{\kappa_{1}^{3}}
\leq \frac{C\delta'}{\ve\delta^{2}}\sum_{p>l}\frac{\sigma_{k}^{pp}h_{pp1}^{2}}{\kappa_{1}^{2}}.
\end{equation}
Combining (\ref{Lemma 2 eqn 9}) and (\ref{Lemma 2 eqn 10}), and using $\kappa_{1}-\ti{\kappa}_{i}\leq\kappa_{1}+1$ for $i>1$, at the expense of decreasing $\delta'$, we see that
\begin{equation}\label{Lemma 2 eqn 11}
\frac{C\sigma_{k}}{\ve\kappa_{1}\sigma_{l}^{2}}\sum_{p>l}(\sigma_{l}^{pp}h_{pp1})^{2}
\leq \sum_{p>l}\frac{\sigma_{k}^{pp}h_{pp1}^{2}}{\kappa_{1}^{2}}
\leq 2\sum_{p>1}\frac{\sigma_{k}^{pp}h_{pp1}^{2}}{\kappa_{1}(\kappa_{1}-\ti{\kappa}_{p})},
\end{equation}
assuming without loss of generality that $\kappa_{1}\geq 1$. Substituting (\ref{Lemma 2 eqn 8}) and (\ref{Lemma 2 eqn 11}) into (\ref{Lemma 2 eqn 7}), we obtain
\[
(1-2\ve)\frac{\sigma_{k}^{11}h_{111}^{2}}{\kappa_{1}^{2}} \leq -\frac{\sigma_{k}^{pp,qq}h_{pp1}h_{qq1}}{\kappa_{1}}
+2\sum_{p>1}\frac{\sigma_{k}^{pp}h_{pp1}^{2}}{\kappa_{1}(\kappa_{1}-\ti{\kappa}_{p})}+C\kappa_{1},
\]
as required.
\end{proof}

\begin{lemma}\label{Lemma 3}
For $\delta\in\left(0,\frac{1}{2}\right)$ and $1\leq l\leq k-1$, there exists a uniform constant $\delta'$ and $C$ depending on $\delta$ such that if $\kappa_{l}\geq\delta\kappa_{1}$ and $\kappa_{l+1}\leq\delta'\kappa_{1}$, then $\kappa_{1}\leq C$.
\end{lemma}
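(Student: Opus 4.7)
The plan is to combine the three preceding lemmas in sequence and then feed the result into the critical point equation (\ref{Calculation eqn 11}) to produce a quadratic obstruction to $\kappa_1$ being large.

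First I would subtract Lemma \ref{Lemma 1} from the inequality of Lemma \ref{Calculation}. The two positive good terms $2\sum_{p>1}\sigma_{k}^{11,pp}h_{11p}^{2}/\kappa_{1}$ and $2\sum_{p>1}\sigma_{k}^{11}h_{11p}^{2}/(\kappa_{1}(\kappa_{1}-\ti{\kappa}_{p}))$ absorb exactly the $p>1$ part of the bad term $-\sigma_{k}^{pp}h_{11p}^{2}/\kappa_{1}^{2}$, leaving only its $p=1$ contribution. After this cancellation, Lemma \ref{Calculation} reduces to
\[
0 \geq -\frac{\sigma_{k}^{pp,qq}h_{pp1}h_{qq1}}{\kappa_{1}}+2\sum_{p>1}\frac{\sigma_{k}^{pp}h_{pp1}^{2}}{\kappa_{1}(\kappa_{1}-\ti{\kappa}_{p})}-\frac{\sigma_{k}^{11}h_{111}^{2}}{\kappa_{1}^{2}}+\left(\frac{A}{C}-C\right)\sigma_{k}^{ii}h_{ii}^{2}-CA.
\]
Next I apply Lemma \ref{Lemma 2} (whose hypotheses hold by our assumption on $\kappa_{l},\kappa_{l+1}$, provided $\delta'$ is chosen small enough depending on the $\ve$ we eventually pick and on $\delta$) to replace the first two terms by $(1-2\ve)\sigma_{k}^{11}h_{111}^{2}/\kappa_{1}^{2}-C\kappa_{1}$. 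This yields
\[
0 \geq -2\ve\,\frac{\sigma_{k}^{11}h_{111}^{2}}{\kappa_{1}^{2}}+\left(\frac{A}{C}-C\right)\sigma_{k}^{ii}h_{ii}^{2}-C\kappa_{1}-CA,
\]
so the task is to dominate the one remaining bad term $2\ve\,\sigma_{k}^{11}h_{111}^{2}/\kappa_{1}^{2}$ by the good quantity $\sigma_{k}^{ii}h_{ii}^{2}$.

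This is where the critical point equation enters: specialising (\ref{Calculation eqn 11}) at $p=1$ gives $h_{111}=A\kappa_{1}^{2}\langle e_{1},X\rangle$, and since $|\langle e_{1},X\rangle|\leq C$ we have
\[
\frac{\sigma_{k}^{11}h_{111}^{2}}{\kappa_{1}^{2}} \leq CA^{2}\sigma_{k}^{11}h_{11}^{2} \leq CA^{2}\sigma_{k}^{ii}h_{ii}^{2}.
\]
Now I fix $A$ to be a large uniform constant (in particular $A\geq 4C^{2}$), then choose $\ve$ small, depending only on $A$ and therefore uniformly, so that $2\ve CA^{2}\leq A/(2C)$. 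With this choice Lemma \ref{Lemma 2} forces $\delta'$ to depend only on $\delta$ as required, and absorbing the bad term gives
\[
\left(\frac{A}{2C}-C\right)\sigma_{k}^{ii}h_{ii}^{2} \leq C\kappa_{1}+CA.
\]
Finally, (\ref{Calculation eqn 9}) gives $\kappa_{1}\leq C\sigma_{k}^{11}h_{11}^{2}\leq C\sigma_{k}^{ii}h_{ii}^{2}$, so after one more absorption with $A$ large enough, I conclude $\kappa_{1}\leq C$.

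The only step that requires care is the bookkeeping of constants: $A$ must be fixed \emph{before} $\ve$, $\ve$ must be fixed before $\delta'$, and the constants $C$ arising from Lemma \ref{Lemma 2} must be uniform. The structural mechanism—using the critical point equation to convert the last bad third-order term into a multiple of $\sigma_{k}^{ii}h_{ii}^{2}$, and then using (\ref{Calculation eqn 9}) to convert the good second-order term back into $\kappa_{1}$—is straightforward once the three lemmas are set up.
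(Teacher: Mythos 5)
Your proposal is correct and follows essentially the same route as the paper: combine Lemma \ref{Calculation}, Lemma \ref{Lemma 1} and Lemma \ref{Lemma 2} to isolate the single bad term $-2\ve\,\sigma_{k}^{11}h_{111}^{2}/\kappa_{1}^{2}$, eliminate it via the critical-point relation (\ref{Calculation eqn 11}) at $p=1$, and close with (\ref{Calculation eqn 9}); the paper just makes the constant choice explicit as $A=2C_{0}^{2}+C_{0}$, $\ve=1/A^{2}$. One small point of care: since you rightly insist that $A$ be fixed before $\ve$ and $\ve$ before $\delta'$, the phrase ``after one more absorption with $A$ large enough'' at the very end should be read as having already folded that requirement into the initial choice of $A$ (as the paper does), rather than as a second, later enlargement of $A$.
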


\begin{proof}
Combining Lemma \ref{Calculation}, \ref{Lemma 1} and \ref{Lemma 2}, we obtain
\[
0 \geq -2\ve\frac{\sigma_{k}^{11}h_{111}^{2}}{\kappa_{1}^{2}}+\left(\frac{A}{C}-C\right)\sigma_{k}^{ii}h_{ii}^{2}-C\kappa_{1}-CA.
\]
Using (\ref{Calculation eqn 11}), we have
\[
-2\ve\frac{\sigma_{k}^{11}h_{111}^{2}}{\kappa_{1}^{2}}
= -2\ve A^{2}\sigma_{k}^{11}h_{11}^{2}\langle e_{1},X\rangle^{2}
\geq -C\ve A^{2}\sigma_{k}^{11}h_{11}^{2}.
\]
It then follows that
\[
0 \geq \left(\frac{A}{C}-C-C\ve A^{2}\right)\sigma_{k}^{ii}h_{ii}^{2}-C\kappa_{1}-CA.
\]
Using (\ref{Calculation eqn 9}), we have
\[
0 \geq \left(\frac{A}{C_{0}}-C_{0}-C_{0}\ve A^{2}\right)\kappa_{1}-C_{0}A,
\]
for some uniform constant $C_{0}$. Choosing $A=2C_{0}^{2}+C_{0}$ and $\ve=\frac{1}{A^{2}}$, we obtain $\kappa_{1}\leq C$, as required.
\end{proof}

We now complete the proof of Theorem \ref{Curvature estimate}. Set $\delta_{1}=\frac{1}{3}$, By Lemma \ref{Lemma 3}, there exists $\delta_{2}$ such that if $\kappa_{2}\leq\delta_{2}\kappa_{1}$, then $\kappa_{1}\leq C$. If $\kappa_{2}>\delta_{2}\kappa_{1}$, using Lemma \ref{Lemma 3} again, there exists $\delta_{3}$ such that if $\kappa_{3}\leq\delta_{3}\kappa_{1}$, then $\kappa_{1}\leq C$. Repeating the above argument, we obtain $\kappa_{1}\leq C$ or $\kappa_{k}>\delta_{k}\kappa_{1}$. In the latter case, since $\kappa_{1}\geq\kappa_{2}\geq\cdots\geq\kappa_{k}>\delta_{k}\kappa_{1}$ and $\kappa_{i}>0$ for all $i$, then
\[
\delta_{k}^{k}\kappa_{1}^{k} < \kappa_{1}\cdots\kappa_{k} \leq \sigma_{k} = f \leq C,
\]
which implies  $\kappa_{1}\leq C$, as required.
\end{proof}


\begin{thebibliography}{99}

\bibitem{Aleksandrov56} Aleksandrov, A. D. {\em Uniqueness theorems for surfaces in the large. I, (Russian)} Vestnik Leningrad. Univ. {\bf 11} (1956), no. 19, 5--17.

\bibitem{Alexandroff42} Alexandroff, A. {\em Existence and uniqueness of a convex surface with a given integral curvature}, C. R. (Doklady) Acad. Sci. URSS (N.S.) {\bf 35} (1942). 131--134.

\bibitem{BK74} Bakelman, I. Ja., Kantor, B. E. {\em Existence of a hypersurface homeomorphic to the sphere in Euclidean space with a given mean curvature,  (Russian)} Geometry and topology, No. 1 (Russian), pp. 3–10. Leningrad. Gos. Ped. Inst. im. Gercena, Leningrad, 1974.

\bibitem{CNS84} Caffarelli, L., Nirenberg, L., Spruck, J. {\em The Dirichlet problem for nonlinear second-order elliptic equations. I. Monge-Amp\`ere equation},
Comm. Pure Appl. Math. {\bf 37} (1984), no. 3, 369--402.

\bibitem{CNS86} Caffarelli, L., Nirenberg, L., Spruck, J. {\em Nonlinear second order elliptic equations. IV. Starshaped compact Weingarten hypersurfaces}, Current topics in partial differential equations, 1–26, Kinokuniya, Tokyo, 1986.

\bibitem{CY76} Cheng, S. Y., Yau, S.-T. {\em On the regularity of the solution of the $n$-dimensional Minkowski problem}, Comm. Pure Appl. Math. {\bf 29} (1976), no. 5, 495--516.

\bibitem{CW01} Chou, K.-S., Wang, X.-J. {\em A variational theory of the Hessian equation}, Comm. Pure Appl. Math. {\bf 54} (2001), no. 9, 1029--1064.

\bibitem{Gerhardt96} Gerhardt, C. {\em Closed Weingarten hypersurfaces in Riemannian manifolds}, J. Differential Geom. {\bf 43} (1996), no. 3, 612--641.

\bibitem{GG02} Guan, B., Guan, P. {\em Convex hypersurfaces of prescribed curvatures}, Ann. of Math. (2) {\bf 156} (2002), no. 2, 655--673.

\bibitem{GLL12} Guan, P., Li, J., Li, Y. Y. {\em Hypersurfaces of prescribed curvature measure}, Duke Math. J. {\bf 161} (2012), no. 10, 1927--1942.

\bibitem{GLM09} Guan, P., Lin, C., Ma, X.-N. {\em The existence of convex body with prescribed curvature measures}, Int. Math. Res. Not. IMRN 2009, no. 11, 1947--1975.

\bibitem{GRW15} Guan, P., Ren, C., Wang, Z. {\em Global $C^{2}$-estimates for convex solutions of curvature equations}, Comm. Pure Appl. Math. {\bf 68} (2015), no. 8, 1287--1325.

\bibitem{Ivochkina89} Ivochkina, N. M. {\em Solution of the Dirichlet problem for equations of $m$th order curvature}, Mat. Sb. {\bf 180} (1989), no. 7, 867--887, 991; translation in Math. USSR-Sb. {\bf 67} (1990), no. 2, 317--339.

\bibitem{Ivochkina90} Ivochkina, N. M. {\em The Dirichlet problem for the curvature equation of order $m$}, Algebra i Analiz {\bf 2} (1990), no. 3, 192--217; translation in Leningrad Math. J. {\bf 2} (1991), no. 3, 631--654.

\bibitem{Nirenberg53} Nirenberg, L. {\em The Weyl and Minkowski problems in differential geometry in the large}, Comm. Pure Appl. Math. {\bf 6} (1953), 337--394.

\bibitem{Pogorelov53} Pogorelov, A. V. {\em On existence of a convex surface with a given sum of the principal radii of curvature, (Russian)} Uspekhi Mat. Nauk. {\bf 8} (1953), 127--130.

\bibitem{Pogorelov78} Pogorelov, A. V. {\em The Minkowski multidimensional problem}, Scripta Series in Mathematics. V. H. Winston \& Sons, Washington, D. C.; Halsted Press [John Wiley \& Sons], New York-Toronto-London, 1978.

\bibitem{RW19} Ren, C., Wang, Z. {\em On the curvature estimates for Hessian equations}, Amer. J. Math. {\bf 141} (2019), no. 5, 1281--1315.

\bibitem{RW20} Ren, C., Wang, Z. {\em The global curvature estimate for the $n-2$ Hessian equation}, preprint, arXiv: 2002.08702.

\bibitem{Spruck05} Spruck, J. {\em Geometric aspects of the theory of fully nonlinear elliptic equations}, Global theory of minimal surfaces, 283--309,
Clay Math. Proc., {\bf 2}, Amer. Math. Soc., Providence, RI, 2005.

\bibitem{SX17} Spruck, J., Xiao, L. {\em A note on star-shaped compact hypersurfaces with prescribed scalar curvature in space forms}, Rev. Mat. Iberoam. {\bf 33} (2017), no. 2, 547--554.

\bibitem{TW83} Treibergs, A. E., Wei, S. W. {\em Embedded hyperspheres with prescribed mean curvature}, J. Differential Geom. {\bf 18} (1983), no. 3, 513--521.

\end{thebibliography}
\end{document}